\newtheorem{theorem}{Theorem}[section]
\newtheorem{lemma}[theorem]{Lemma}
\newtheorem{corollary}[theorem]{Corollary}
\newtheorem{proposition}[theorem]{Proposition}
\theoremstyle{definition}
\newtheorem{example}[theorem]{Example}
\theoremstyle{remark}
\newtheorem{remark}[theorem]{Remark}
\numberwithin{equation}{section}
\newcommand*\xbar[1]{%
	\hbox{%
		\vbox{%
			\hrule height 0.5pt 
			\kern0.5ex
			\hbox{%
				\kern-0.1em
				\ensuremath{#1}%
				\kern-0.1em
			}%
		}%
	}%
}
\begin{document}
	\title{Rigidity of Bach-Flat Gradient Schouten Solitons}
	
	\author{Valter Borges}
	\address{Department of Mathematics, 70910-900, Bel\'em-PA, Brazil}
	\email{valterborges@mat.ufpa.br}
	
	\subjclass[2000]{Primary 35Q51, 53B20, 53C20, 53C25; Secondary 34A40}
	
	\date{\today}
	
	\keywords{Schouten Soliton, Schouten Flow, Gradient Estimate, Volume Estimate}
\begin{abstract}
In this paper we show that a complete Schouten soliton whose Ricci tensor has at most two eigenvalues at each point is rigid. This allows the classification of both shrinking and expanding Bach-flat Schouten solitons for $n\geq4$. When $n=3$ we are able to conclude rigidity under a more general condition, namely when the Bach tensor is divergence free. These results imply rigidity of locally conformally flat Schouten solitons for $n\geq3$.

\end{abstract}
\maketitle
\section{Introduction and main results}

A Riemannian manifold $(M^n,g)$ endowed with a smooth function $f\in C^{\infty}(M)$ and for which there is a constant $\lambda$ satisfying the tensorial equation
\begin{align}\label{fundeq}
Ric+\nabla\nabla f=\left(\frac{R}{2(n-1)}+\lambda\right)g
\end{align}
is called a {\it gradient Schouten soliton}, where $f$ is called its {\it potential function}. In the equation above $\nabla\nabla f$ is the Hessian of $f$, $R$ is the scalar curvature and $Ric$ is the Ricci tensor of $M$. The soliton is called {\it shrinking}, {\it steady} or {\it expanding}, provided $\lambda$ is positive, zero or negative, respectively. In this case we use the notation $(M^n,g,f,\lambda)$.

Schouten solitons were introduced in \cite{catino} by Catino and Mazzieri. In the same paper they introduced the {\it $\rho$-Einstein solitons}, which are depicted as the Riemannian manifolds for which given $\rho\in\mathbb{R}$, there are $f\in C^{\infty}(M)$, also called {\it potential function}, and $\lambda\in\mathbb{R}$, satisfying
\begin{align}\label{genfundeq}
	Ric+\nabla\nabla f=\left(\rho R+\lambda\right)g.
\end{align}
For $\rho=1/2(n-1)$ one recovers $(\ref{fundeq})$, while for $\rho=0$ one obtains the famous Ricci solitons \cite{hamilton1}, which have been intensively studied in the recent years. It is important to mention that different values of $\rho$ may give rise to quite different objects. For instance, if $\rho\neq0$ then the corresponding $\rho$-Einstein manifold is rectifiable (see \cite{catino} for the definition) and when $\rho\in\{1/2,1/2(n-1),1/n\}$ the corresponding ones which are compact must have constant potential function. Another difference is that, while it is known that for $\rho\notin\{1/2(n-1),1/n\}$ the corresponding solitons are analytic, nothing is known in the remaining cases. For the proofs see \cite{catino,catino1}.

$\rho$-Einstein solitons are closely related to certain geometric evolution equations, for they are the static formulations of self-similar solutions of such equations \cite{catino1}, whose definition has dynamic nature. It is important to mention that among these evolution equations, the one for which $(\ref{fundeq})$ represents a self-similar solution is still an enigma concerning basic properties such as short time existence \cite{catino2}.


Besides rectifiability \cite{catino}, other general results on Schouten solitons were obtained by the author in the manuscript \cite{borges}. Among other results, the author has shown in \cite{borges} that the scalar curvature of such metrics are bounded and have a defined sign. It has also been proved that the potential function controls the growth of the squared norm of its gradient. See Theorem \ref{maintheorem} in Section \ref{preliminariesresults} below.

Concerning examples of Schouten solitons, the simplest ones are Einstein manifolds. Other examples are obtained as follows.
\begin{example}\label{example}
	Given $n\geq3$, $k\leq n$ and $\lambda\in\mathbb{R}$, consider an Einstein manifold $(N^{k},g)$ of dimension $k\leq n$ and scalar curvature
	\begin{align}\label{scacurv}
		R_{N}=\frac{2(n-1)k\lambda}{2(n-1)-k}.
	\end{align}
	Now, if $(x,p)\in\mathbb{R}^{n-k}\times N^{k}$, $\|x\|^2$ denotes the Euclidean norm, and
	\begin{align}\label{potfuc}
	f(x,p)=\frac{1}{2}\left(\frac{R_{N}}{2(n-1)}+\lambda\right)\|x\|^2	,
	\end{align}
	it follows that $(\mathbb{R}^{n-k}\times_{\Gamma} N^k,g,f,\lambda)$ is an $n$ dimensional Schouten soliton, where $g=\left\langle ,\right\rangle+g_{N}$, and $\Gamma$ acts freely on $N$ and by orthogonal transformations on $\mathbb{R}^{n-k}$. If $k=0$, $(\mathbb{R}^{n},g,f,\lambda)$ will be addressed as the {\it Gaussian soliton}.
\end{example}

A Riemannian manifold is called {\it rigid} if it is isometric to one of those described in Example \ref{example}. The following classes of Schouten solitons were proven to be rigid in \cite{catino}: {\it compact}; {\it complete noncompact with $\lambda=0$ and $n\geq3$}; {\it complete noncompact with $\lambda>0$ and $n=3$}; {\it complete warped products $B^1\times_{h}N^{n-1}$ (including those with rotational symmetry), where $B^1$ is a one dimensional manifold, $h:B^1\rightarrow\mathbb{R}$ is a positive smooth function and $N^{n-1}$ a space form.}

In this paper we will add to the list above Schouten solitons whose Ricci tensors have at most two principal curvatures at each point. Namely,
\begin{theorem}\label{criteria}
	Let $(M^n,g,f,\lambda)$, $\lambda\neq0$, be a complete Schouten soliton with $n\geq3$ and assume that its Ricci tensor has at most two eigenvalues at each point of $M$. Then one of the following is true
	\begin{enumerate}
		\item $M$ is Einstein with scalar curvature given by $(\ref{scacurv})$ for $k=n$, and $f$ is constant;
		\item $M$ is isometric to a finite quotient of the Gaussian Soliton, with potential function given by $(\ref{potfuc})$;
		\item\label{3item} $M$ is isometric to a finite quotient of $\mathbb{R}\times N^{n-1}$, where $N^{n-1}$ is Einstein if $n\geq4$, and a space form if $n=3$. Furthermore, the scalar curvature and the potential function are given by $(\ref{scacurv})$ and $(\ref{potfuc})$ for $k=n-1$.
	\end{enumerate}
	In particular, $(M^n,g,f,\lambda)$ is rigid.
\end{theorem}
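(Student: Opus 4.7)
\emph{Proof plan.} The strategy is to first extract an algebraic constraint from the soliton equation, then perform a case analysis on the Ricci spectrum, and if necessary split $M$ as a Riemannian product.

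The starting point is the identity $Ric(\nabla f)=0$, which I would derive by taking the divergence of $(\ref{fundeq})$ and combining the twice-contracted Bianchi identity $2\,\mathrm{div}(Ric)=\nabla R$, the commutation formula $\Delta\nabla f=\nabla\Delta f+Ric(\nabla f)$, and the trace $\Delta f=\frac{2-n}{2(n-1)}R+n\lambda$ obtained by tracing $(\ref{fundeq})$: the $\nabla R$ terms cancel identically. Consequently, wherever $\nabla f\neq 0$, zero is an eigenvalue of $Ric$ with $\nabla f$ as eigenvector.

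I then split into cases. If $\nabla f\equiv 0$, then $(\ref{fundeq})$ forces $Ric=\sigma g$ with $\sigma=\frac{R}{2(n-1)}+\lambda$; Schur's lemma (using $n\geq 3$) makes $R$ constant, yielding $R=\frac{2n(n-1)\lambda}{n-2}$, which matches $(\ref{scacurv})$ at $k=n$, i.e.\ conclusion (1). Otherwise, on $U=\{\nabla f\neq 0\}$ the two-eigenvalue hypothesis together with the identity above shows that the spectrum of $Ric$ is $\{0,\mu(p)\}$. If $\mu\equiv 0$ on $U$, then $Ric\equiv 0$ and $(\ref{fundeq})$ reduces to $\nabla\nabla f=\lambda g$; completeness combined with Tashiro's theorem on nontrivial concircular functions identifies $M$ with a finite quotient of the Gaussian soliton carrying the potential $(\ref{potfuc})$, which is conclusion (2).

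The remaining case is when $Ric$ has two distinct eigenvalues $0$ and $\mu\neq 0$ on a nonempty open $V\subseteq U$, with $\nabla f$ in the zero-eigenspace $E_0$ and the other eigenspace $E_\mu$ orthogonal to it. Because $\nabla\nabla f=\sigma g-Ric$, the Hessian of $f$ shares the eigenspaces of $Ric$, acting as $\sigma\,I$ on $E_0$ and $(\sigma-\mu)I$ on $E_\mu$. The main task is to upgrade this common eigenspace structure to parallelism of $E_0$ and $E_\mu$. I would do this by differentiating $(\ref{fundeq})$ to produce a Codazzi-type identity for $Ric$ and combining it with $Ric(\nabla f)=0$ together with the scalar-curvature bounds of Theorem \ref{maintheorem} of \cite{borges}, forcing the vanishing of the off-diagonal components of $\nabla Ric$. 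Once parallelism is established, passing to the universal cover and applying the de Rham decomposition theorem yields $\widetilde M=L_0\times L_\mu$. On $L_0$, $\nabla\nabla f=\sigma\,I$ with $\sigma$ constant, so Tashiro's theorem identifies $L_0$ with a Euclidean factor and $f|_{L_0}$ with the quadratic potential $(\ref{potfuc})$; on $L_\mu$, $Ric=\mu\,g$ with $\mu$ constant yields an Einstein factor of the scalar curvature prescribed by $(\ref{scacurv})$, which in the borderline case $n=3$ is a surface and hence automatically a space form. This places $M$ in Example \ref{example} and gives conclusion (3).

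The crux of the argument is the parallelism step: $(\ref{fundeq})$ directly controls only $\nabla\nabla f$, not $\nabla Ric$, so one has to leverage $Ric(\nabla f)=0$ and the common eigenspace structure to propagate information from the Hessian to the covariant derivative of the Ricci tensor. A secondary subtlety is globalising the local splitting from $V$ to all of $M$, which cannot invoke real-analyticity (unknown for Schouten solitons) and instead must rely on the $|\nabla f|^2$ and scalar-curvature estimates from \cite{borges}.
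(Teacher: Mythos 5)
Your outline gets the easy parts right (the cases $f$ constant and $Ric\equiv 0$, the use of $Ric(\nabla f)=0$, the intended product structure), but the step you yourself identify as the crux --- upgrading the common eigenspace structure of $Ric$ and $\nabla\nabla f$ to parallelism of $E_0$ and $E_\mu$ --- is exactly where the real difficulty sits, and your proposal does not supply an argument for it. Saying that a Codazzi-type identity for $Ric$ plus the bounds of Theorem \ref{maintheorem} ``forces the vanishing of the off-diagonal components of $\nabla Ric$'' is not substantiated, and pointwise the two-eigenvalue condition with $\nabla f\in E_0$ is compatible with non-split geometries (e.g.\ warped products $B^1\times_h N$ with $N$ Einstein, which \cite{catino} had to treat by a separate argument); so no purely algebraic manipulation of $(\ref{fundeq})$ at a point can yield parallelism. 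Moreover your later steps quietly assume what has to be proved: you write ``$\nabla\nabla f=\sigma I$ with $\sigma$ constant'' on $L_0$ and ``$Ric=\mu g$ with $\mu$ constant'' on $L_\mu$, but $\sigma=\frac{R}{2(n-1)}+\lambda$, and the constancy of $R$ (hence of $\sigma$ and $\mu$) is never established --- nor is the globalization from the open set $V$ to all of $M$, which, as you note, cannot lean on analyticity.

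The paper closes precisely this gap by a different mechanism. Using density of the regular set (Proposition \ref{dense}), the hypothesis is converted into $(n-1)|Ric|^2=R^2$ on $M$, which by Proposition \ref{strategylemma} means that $b(s)=|\nabla f(\alpha(s))|^2$ solves the ODE $bb''-(b')^2+6\lambda b'-8\lambda^2=0$ along every maximal integral curve of $\nabla f/|\nabla f|^2$. A first integral (Lemma \ref{lemmaintegrability}, Corollary \ref{corotobe}) gives the global identity $(R-2(n-1)\lambda)^2=\sigma_0(n-1)R|\nabla f|^2$, and evaluating its Laplacian at the critical point of $f$ guaranteed by Theorem \ref{maintheorem}, together with Bochner's formula, produces the contradiction $0\geq 4\lambda^2>0$ unless $b'$ is constant (Proposition \ref{mainIng}). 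This yields $R\equiv 0$ or $R\equiv 2(n-1)\lambda$, so $M$ is a gradient Ricci soliton with constant scalar curvature and $Ric$ of constant rank $0$, $n-1$ or $n$, and the splitting/parallelism you were trying to prove by hand is then delivered by Theorem 2 of \cite{ferlo}. If you want to keep your de Rham--Tashiro scheme, you must first prove constancy of $R$ (or reproduce the argument of \cite{ferlo}); as written, the proposal has a genuine gap at its central step.
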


In order to prove the theorem above, we will show that the scalar curvature of such manifolds are constant, which implies that their Ricci tensors have constant rank equals to $0$, $n-1$ or $n$. Previous results on rigidity of Ricci solitons are used to conclude the proof \cite{ferlo}.

In what follows we will rewrite the condition of Theorem \ref{criteria} on the eigenvalues of the Ricci tensor. As it was proved by Catino and Mazzieri \cite{catino}, wherever $\nabla f$ does not vanish, it is an eigenvector of $Ric$, whose corresponding eigenvalue is $0$. Therefore, the hypothesis of having at most two eigenvalues turns out to be equivalent to
\begin{align}\label{atmost2}
	R^2-(n-1)|Ric|^2=0
\end{align}
on the set of regular points of $M$. In \cite{borges} the author has shown that the set of regular points of $f$ is dense in $M$ (see also Proposition \ref{dense} below), which implies that $(\ref{atmost2})$ is equivalent to the Ricci curvature of $M$ having at most two eigenvalues at each point of $M$.

Now we turn to the classification of Bach-flat Schouten solitons. Recall that steady Schouten solitons were already proven to be Ricci flat. Therefore, we concentrate our study in classifying shrinking and expanding Schouten solitons. There is a wildly known classification of Bach-flat Ricci solitons. In \cite{caoqian}, Cao and Chen showed rigidity of Bach-flat shrinking Ricci solitons. In \cite{caocatino}, Cao, Catino and coworkers classified steady Ricci solitons, showing that they must be either rigid or homothetic to the Bryant soliton. It was also proved in \cite{caocatino} that Bach-flat expanding Ricci solitons with nonnegative Ricci curvature are rotationally symmetric. Although, there are non rigid examples in the latter case (see page 12 of \cite{caocatino}).

The rigidity of Bach-flat Schouten solitons is a consequence of Theorem \ref{criteria}. More precisely, we will show that $\nabla f$ is an eigenvalue of the Bach tensor of a Schouten soliton at regular points of $f$, whose corresponding eigenvalue is, up to a constant, given by the expression in left hand side of $(\ref{atmost2})$. This connects Theorem \ref{criteria} and Theorem \ref{vanisbach}, and shows that the same result is true if the Bach tensor vanishes in the direction of $\nabla f$.
\begin{theorem}\label{vanisbach}
	 A complete noncompact Bach-flat Schouten soliton $(M^n,g,f,\lambda)$ with $n\geq4$ and $\lambda\neq0$ is isometric to one of those listed in Theorem \ref{criteria}.
\end{theorem}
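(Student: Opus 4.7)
The plan is to reduce Theorem \ref{vanisbach} to Theorem \ref{criteria} by establishing the claim announced in the paragraph preceding the statement: for any Schouten soliton and at every regular point of $f$,
\begin{align}
B(\nabla f,\cdot) = c_n\bigl(R^2-(n-1)|Ric|^2\bigr)\,df,
\end{align}
where $c_n$ is a nonzero dimensional constant. Granted this identity, the Bach-flat hypothesis forces $R^2-(n-1)|Ric|^2$ to vanish on the regular set of $f$, and Proposition \ref{dense} (density of regular points, from \cite{borges}) propagates the vanishing to all of $M$. The Ricci tensor then has at most two eigenvalues at every point, placing us exactly in the setting of Theorem \ref{criteria}, whose conclusion is what we want.

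First I would recast the soliton equation (\ref{fundeq}) as $(n-2)A=\lambda g-\nabla\nabla f$, where $A$ denotes the Schouten tensor. This identifies $A$, up to a constant multiple of $g$, with $-\tfrac{1}{n-2}\nabla\nabla f$, and it yields clean expressions for the Cotton tensor $C$ and for $\nabla_i R_{jk}$ in terms of third covariant derivatives of $f$. I would also invoke the observation of \cite{catino} that on the regular set $\nabla f$ is an eigenvector of $Ric$ with eigenvalue $0$, a fact that controls how the Weyl tensor interacts with the direction $\nabla f$.

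Next, starting from the standard identity
\begin{align}
(n-2)B_{ij} = \nabla^k C_{kij}+W_{kijl}R^{kl},
\end{align}
valid for $n\geq 4$, I would contract against $\nabla^j f$. The Weyl term is purely algebraic: using the symmetries and trace-freeness of $W$ together with $Ric(\nabla f,\cdot)=0$, only the components of $Ric$ transverse to $\nabla f$ contribute, and a trace computation produces a scalar multiple of $R^2-(n-1)|Ric|^2$ paired with $df$. For the divergence term, I would substitute the soliton-derived expression for $C_{kij}$, commute covariant derivatives via the Ricci identity, and then use (\ref{fundeq}) repeatedly to eliminate all third derivatives of $f$ in favor of curvature quantities. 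Careful bookkeeping of the scalar coefficients is expected to show that the transverse contributions cancel and that what remains is exactly the asserted multiple of $df$.

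The main obstacle is precisely this tensorial computation: although the strategy parallels the Cao--Chen analysis of Bach-flat shrinking Ricci solitons in \cite{caoqian}, the extra $\tfrac{R}{2(n-1)}g$ term in (\ref{fundeq}) produces additional contributions whose cancellation into the structure $(R^2-(n-1)|Ric|^2)\,df$ requires genuine care, particularly in handling the commutators of $\nabla^k$ with $\nabla\nabla f$. Once the eigenvector identity is in place, the conclusion is immediate: Bach-flatness gives $R^2-(n-1)|Ric|^2\equiv 0$ on a dense set, hence on $M$, and Theorem \ref{criteria} identifies $(M^n,g,f,\lambda)$ with one of the models listed there.
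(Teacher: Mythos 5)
Your proposal is correct and follows essentially the same route as the paper: the eigenvector identity you announce is exactly the paper's Proposition \ref{Propgradeigv} (with $c_n=\tfrac{1}{(n-1)(n-2)^2}$), and the reduction via Bach-flatness, density of the regular set (Proposition \ref{dense}), and Theorem \ref{criteria} is the paper's argument verbatim. The tensorial computation you flag as the main obstacle in fact closes cleanly, since the $\tfrac{R}{2(n-1)}g$ term in $(\ref{fundeq})$ cancels the scalar-curvature terms in the definition of the Cotton tensor, giving $C_{ijk}=R_{jikl}\nabla_{l}f$, after which contracting $(n-2)B_{ij}=\nabla^{k}C_{kij}+W_{kijl}R^{kl}$ with $\nabla f$ and using $Ric(\nabla f,\cdot)=0$ produces the claimed multiple of $df$.
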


Notice that unlike expanding Bach-flat Ricci solitons, an expanding Schouten soliton is rigid under the vanishment of its Bach tensor.

For $n=3$ we follow the same definition of Bach tensor introduced in \cite{caocatino} (see $(\ref{bhctn=3})$). With this definition, Theorem \ref{vanisbach} is also true for a $3$ dimensional Schouten soliton, and the proof does not change. But a stronger result is true, as we state below.

\begin{theorem}\label{vanisdivbach}
	A complete noncompact Schouten soliton $(M^3,g,f,\lambda)$ with $\lambda\neq0$, whose Bach tensor is divergence-free, is isometric to one of those listed in Theorem \ref{criteria}. Besides, $N^2$ in item $(\ref{3item})$ is a space form.
\end{theorem}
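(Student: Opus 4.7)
The plan is to reduce Theorem~\ref{vanisdivbach} to Theorem~\ref{criteria} by proving that the Cotton tensor $C$ of $M$ vanishes identically. Differentiating the Schouten soliton equation once and antisymmetrizing yields the pointwise identity $C_{kij}=R_{kij}{}^{\ell}\nabla_{\ell}f$. In dimension three the Weyl tensor vanishes, so substituting the Ricci decomposition of Riemann and using that $\nabla f$ is a null-eigenvector of $\operatorname{Ric}$ on the dense set of regular points (Proposition~\ref{dense}) gives the clean expression
\[
 C_{kij}\;=\;G_{kj}\nabla_{i} f\;-\;G_{ij}\nabla_{k} f,\qquad G:=\operatorname{Ric}-\tfrac{R}{2}g,
\]
from which contraction produces
\[
 |C|^{2}\;=\;\bigl(2|\operatorname{Ric}|^{2}-R^{2}\bigr)\,|\nabla f|^{2}.
\]
Once $C\equiv 0$ is known, this forces $R^{2}-2|\operatorname{Ric}|^{2}\equiv 0$ on the dense regular set, hence on $M$, and Theorem~\ref{criteria} supplies the rigidity. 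The additional conclusion that $N^{2}$ is a space form is automatic, since any Einstein surface has constant sectional curvature.

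\textbf{Two expressions for one integral.} To establish $C\equiv 0$, I compute the weighted integral $I:=\int_{M}B(\nabla f,\nabla f)\,e^{-f}dV$ in two different ways, using the Cao--Catino three-dimensional Bach tensor $B_{ij}=\nabla^{k}C_{kij}$. In the first computation I integrate by parts to move the divergence off $C$: the antisymmetry $C_{kij}=-C_{ikj}$ kills the term containing $\nabla^{k}\nabla^{i}f$, and substituting the Schouten equation into the remaining term and using the tracelessness $g^{kj}C_{kij}=0$, the identity $C_{kij}R^{kj}=(|\operatorname{Ric}|^{2}-R^{2}/2)\nabla_{i}f$, and the cancellation $C_{kij}\nabla^{i}f\nabla^{j}f=0$, one collapses everything to $I=\tfrac{1}{2}\int_{M}|C|^{2}e^{-f}dV$. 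In the second computation I use $\operatorname{div}B=0$ together with the tracelessness of $B$ and the Schouten equation to obtain $I=-\int_{M}\langle B,\operatorname{Ric}\rangle e^{-f}dV$; a further integration by parts, combined with the two identities $C_{kij}R^{ij}\nabla^{k}f=-\tfrac{1}{2}|C|^{2}$ and $C_{kij}\nabla^{k}R^{ij}=\tfrac{1}{2}|C|^{2}$ (both consequences of Cotton-tracelessness and the explicit form of $C$), gives $I=\int_{M}|C|^{2}e^{-f}dV$. Equating the two expressions forces $\int_{M}|C|^{2}e^{-f}dV=0$, hence $C\equiv 0$.

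\textbf{Main obstacle.} The principal technical difficulty is justifying the several integrations by parts on a complete noncompact manifold without picking up boundary contributions. I would handle this by truncating with cutoffs supported in sublevel sets of $f$ and controlling the weighted boundary integrals using the exponential gradient bound $|\nabla f|^{2}\le 2\lambda f+c$ and the polynomial weighted volume estimate, both recalled in Theorem~\ref{maintheorem} and established in~\cite{borges}. A secondary obstacle is verifying the required pointwise contractions of $C$ with $\operatorname{Ric}$, $\nabla f$, and $\nabla\operatorname{Ric}$; while routine, these rely delicately on the identity $C_{kij}=G_{kj}\nabla_{i}f-G_{ij}\nabla_{k}f$, which in turn requires the vanishing of the Weyl tensor in dimension three, explaining why this stronger (div-free Bach) conclusion is available only when $n=3$.
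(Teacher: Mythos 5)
Your algebraic skeleton is essentially sound: $C_{kij}=R_{ikjl}\nabla^{l}f$, the three-dimensional expression of $C$ through $G=Ric-\tfrac{R}{2}g$, and the consequence $|C|^{2}=(2|Ric|^{2}-R^{2})|\nabla f|^{2}$ are all correct (a sign-convention slip changes your intermediate constants — with one consistent convention the second evaluation gives $I=0$ rather than $\int|C|^{2}e^{-f}dV$ — but the two evaluations still differ by a positive multiple of $\int|C|^{2}e^{-f}dV$, so the formal conclusion survives). The genuine gap is the global step. Every integration by parts is performed on a complete noncompact manifold against the weight $e^{-f}$, and the justification you sketch (cutoffs in sublevel sets of $f$, the gradient bound, a ``polynomial weighted volume estimate'') is not available in the generality you need. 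The volume estimates of \cite{borges} recalled after Theorem \ref{maintheorem} concern shrinking solitons, whereas the case in which Theorem \ref{vanisdivbach} is actually new is the expanding one ($\lambda<0$, see the Remark following the theorem statements): there $f$ attains a maximum and is unbounded below, so by $(\ref{gradestimate})$ the weight $e^{-f}$ blows up roughly like $e^{c\,d^{2}}$, the weighted measure is infinite, and nothing forces $\int|C|^{2}e^{-f}dV$, $\int\langle B,Ric\rangle e^{-f}dV$, or the cutoff boundary terms to converge. Even for shrinkers, your integrands involve $|Ric|$, $\nabla Ric$ and $C$, none of which is bounded or shown to lie in weighted $L^{2}$ by anything quoted here (only $R$ is bounded, by $(\ref{positivity})$); such weighted curvature integrability is a nontrivial lemma in the Ricci-soliton literature and would have to be supplied. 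As written, the two computations of $I$ are formal.

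The detour is also avoidable, and this is exactly where the paper goes: the hypothesis can be used pointwise. In dimension three $(\operatorname{div}B)_{j}=-R_{kl}C_{jkl}$ (quoted from \cite{caocatino}), and contracting with $\nabla f$ via the very contractions you already computed, $C_{jkl}R^{kl}=\pm\bigl(|Ric|^{2}-\tfrac{R^{2}}{2}\bigr)\nabla_{j}f$, gives $(\operatorname{div}B)(\nabla f)=\pm\tfrac12|C|^{2}=\pm\tfrac12\,(2|Ric|^{2}-R^{2})|\nabla f|^{2}$ at every regular point. Hence $\operatorname{div}B\equiv0$ forces $R^{2}=2|Ric|^{2}$ on the dense set of regular points (Proposition \ref{dense}), and Theorem \ref{criteria} concludes; this is Proposition \ref{Propgradeigv}, identity $(\ref{gradediv})$, and it requires no integration, no weight, and no curvature decay. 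Your two-integrals scheme could perhaps be salvaged for shrinkers with additional weighted estimates, but for expanders — the new content of the theorem — it fails as proposed, so you should replace the global argument by the pointwise one.
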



\begin{remark}
  Shrinking Schouten solitons have already been classified in dimension $3$ by Catino et all (see Theorem 5.4 in \cite{catino}). Therefore, when $n=3$ the novelty of the theorems above concerns only expanding Schouten solitons.
\end{remark}

It is well known that locally conformally flat manifolds are necessarily Bach-flat. Therefore, a direct consequence of the previous results is the classification of locally conformally flat shrinking and expanding Schouten solitons.

\begin{corollary}\label{confclass}
	A locally conformally flat complete noncompact Schouten soliton $(M^n,g,f,\lambda)$ with $n\geq3$ and $\lambda\neq0$ is isometric to one of those listed in Theorem \ref{criteria}, where $N^{n-1}$ in item $(\ref{3item})$ is a space form.
\end{corollary}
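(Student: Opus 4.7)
The plan is to derive the corollary as an immediate consequence of Theorems \ref{vanisbach} and \ref{vanisdivbach}, since local conformal flatness implies the Bach-tensor hypotheses of those results.

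First I would verify that a locally conformally flat (LCF) Schouten soliton is Bach-flat when $n\geq 4$ and has divergence-free Bach tensor when $n=3$. In dimensions $n\geq 4$ the Weyl tensor $W$ of an LCF manifold vanishes identically, and since the Bach tensor admits the expression
\begin{align*}
B_{ij}=\frac{1}{n-3}\nabla^{k}\nabla^{l}W_{ikjl}+\frac{1}{n-2}R^{kl}W_{ikjl},
\end{align*}
the identity $W\equiv 0$ forces $B\equiv 0$; Theorem \ref{vanisbach} then places $M$ in the classification of Theorem \ref{criteria}. In dimension $n=3$, $W\equiv 0$ holds automatically, and LCF is equivalent to the vanishing of the Cotton tensor; the definition $(\ref{bhctn=3})$ of the three-dimensional Bach tensor is built from the Cotton tensor, so vanishing Cotton yields $B\equiv 0$, in particular $\operatorname{div} B=0$, and Theorem \ref{vanisdivbach} applies.

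It only remains to upgrade the conclusion in item $(\ref{3item})$ from ``$N^{n-1}$ Einstein'' to ``$N^{n-1}$ a space form'' in the range $n\geq 4$ (the case $n=3$ is already contained in Theorem \ref{criteria}). Since LCF is a local condition, it lifts to any Riemannian covering; hence the universal cover, which is the product $\mathbb{R}\times N^{n-1}$, is itself LCF. A direct computation of the Weyl tensor of a Riemannian product then shows that $\mathbb{R}\times N^{n-1}$ is LCF if and only if $N^{n-1}$ has constant sectional curvature, forcing $N^{n-1}$ to be a space form in all dimensions.

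I do not anticipate a serious obstacle: the two previous classification theorems do the heavy lifting, and the only fresh ingredient is the product-Weyl computation in the last paragraph, a classical fact about products of Riemannian manifolds.
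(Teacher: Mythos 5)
Your proposal is correct and follows essentially the same route as the paper: local conformal flatness forces $B\equiv 0$ via $(\ref{origbach})$ for $n\geq4$ and via $(\ref{bhctn=3})$ for $n=3$ (hence $\operatorname{div}B=0$), so Theorem \ref{vanisbach} and Theorem \ref{vanisdivbach} apply, and then conformal flatness of the product $\mathbb{R}\times N^{n-1}$ in item $(\ref{3item})$ forces $N^{n-1}$ to be a space form. Your lifting-to-a-cover and product-Weyl computation simply makes explicit the final step that the paper states without detail.
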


This paper is organized in the following way. In Section \ref{preliminariesresults} we introduce notation, definition and provide the basic tools to prove the main results of this paper. In Section \ref{sectionequality} we deal with Schouten solitons whose Ricci tensors have at most two eigenvalues at each point. We close this section with the proof of Theorem \ref{criteria}. In Section \ref{sectionbachT} we recall the definition of classical tensors as the Weyl, Cotton and Bach tensor (including its extension to $n=3$) present in Theorem \ref{vanisbach}, Theorem \ref{vanisdivbach} and Corollary \ref{confclass} and present their proofs. It is in this section that we show that $\nabla f$ is an eigenvalue of the Bach tensor of $M$ at regular points of $f$.

\section{Preliminary Results}\label{preliminariesresults}
The proposition below collects some important identities on Schouten solitons.

\begin{proposition}[\cite{catino}]If $(M^n,g,f,\lambda)$ is a gradient Schouten soliton, then
	\begin{align}\label{trace}
		\Delta f=n\lambda-\frac{n-2}{2(n-1)}R,
	\end{align}
	\begin{align}\label{Riccizero}
		Ric(\nabla f,X)=0,\ \forall X\in\mathfrak{X}(M),
	\end{align}
	\begin{align}\label{IdentitySch}
		\left\langle\nabla f,\nabla R\right\rangle+\left(\frac{R}{n-1}+2\lambda\right)R=2|Ric|^2.
	\end{align}
\end{proposition}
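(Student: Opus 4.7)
The plan is to derive the three identities in sequence by operating on the fundamental equation (\ref{fundeq}) in three different ways: taking a trace, taking a divergence, and taking a Ricci contraction.

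For (\ref{trace}), I would simply take the pointwise $g$-trace of (\ref{fundeq}). Using $\operatorname{tr}_g(Ric) = R$, $\operatorname{tr}_g(\nabla\nabla f) = \Delta f$, and $\operatorname{tr}_g(g) = n$, the equation reduces to a scalar identity in $R$, $\Delta f$, $\lambda$ that rearranges directly into (\ref{trace}).

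For (\ref{Riccizero}), I would take the divergence of both sides of (\ref{fundeq}). On the Ricci term the contracted second Bianchi identity gives $\operatorname{div}(Ric) = \tfrac{1}{2}\nabla R$, while for the Hessian the standard commutation formula gives
\begin{equation*}
\operatorname{div}(\nabla\nabla f)(X) = \nabla_X(\Delta f) + Ric(\nabla f, X).
\end{equation*}
The right-hand side of (\ref{fundeq}), being a multiple of $g$, has divergence $\tfrac{1}{2(n-1)}\nabla R$. Substituting $\nabla(\Delta f) = -\tfrac{n-2}{2(n-1)}\nabla R$ from (\ref{trace}) makes the coefficients of $\nabla R$ cancel identically, leaving $Ric(\nabla f, X) = 0$.

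For (\ref{IdentitySch}), I would contract (\ref{fundeq}) tensorially with $Ric$ to obtain
\begin{equation*}
|Ric|^2 + \langle Ric, \nabla\nabla f\rangle = \left(\frac{R}{2(n-1)} + \lambda\right)R.
\end{equation*}
To eliminate the cross term, observe that by (\ref{Riccizero}) the vector field with components $V^j = R^{ij}\nabla_i f$ vanishes identically, so $\operatorname{div}(V) = 0$. Expanding this divergence via the Leibniz rule and using once more the contracted Bianchi identity yields $\langle Ric, \nabla\nabla f\rangle = -\tfrac{1}{2}\langle \nabla R, \nabla f\rangle$. Substituting back and multiplying by $2$ gives (\ref{IdentitySch}).

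Conceptually there is no real obstacle here, as these are standard gradient-soliton computations. The only care required is tracking sign and coefficient conventions in the commutation identity for $\operatorname{div}(\nabla\nabla f)$, and verifying that the $\nabla R$ coefficients cancel exactly in the second step; the third step is then essentially forced once (\ref{Riccizero}) is in hand.
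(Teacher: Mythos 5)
Your derivation is correct: the trace computation, the divergence of (\ref{fundeq}) combined with the contracted Bianchi identity and the commutation formula for $\operatorname{div}(\nabla\nabla f)$, and the contraction with $Ric$ together with $\operatorname{div}(Ric(\nabla f,\cdot))=0$ all check out, including the coefficient cancellation $\tfrac{1}{2}-\tfrac{n-2}{2(n-1)}=\tfrac{1}{2(n-1)}$. The paper states this proposition as a citation to \cite{catino} without reproducing a proof, and your argument is the standard one given there, so there is nothing further to reconcile.
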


In \cite{borges} the author proved the following result, which can be seen as the analog of Hamilton's identity for Ricci solitons. The latter plays a fundamental role to Ricci soliton's theory, as one can see for example in \cite{pointofview,petersen} and references therein.
\begin{theorem}[\cite{borges}]\label{maintheorem}
	Let $(M^n,g,f,\lambda)$, $\lambda\neq0$, be a complete noncompact Schouten soliton with $f$ nonconstant. If $\lambda>0$ $($$\lambda<0$, respectively$)$, then the potential function $f$ attains a global minimum $($maximum, respectively$)$ and is unbounded above $($below, respectively$)$. Furthermore,
	\begin{align}\label{positivity}
		0\leq \lambda R\leq2(n-1)\lambda^2,
	\end{align}
	\begin{align}\label{gradestimate}
		2\lambda(f-f_{0})\leq |\nabla f|^2\leq4\lambda(f-f_{0}),
	\end{align}
	with $f_{0}=\displaystyle\min_{p\in M}f(p)$, if $\lambda>0$, $($$f_{0}=\displaystyle\max_{p\in M}f(p)$, if $\lambda<0$, respectively$)$.
\end{theorem}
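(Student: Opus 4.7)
The approach is to work systematically with the three structural identities (\ref{trace}), (\ref{Riccizero}), and (\ref{IdentitySch}). First I would contract (\ref{fundeq}) against $\nabla f$ and apply (\ref{Riccizero}) to obtain the key gradient identity $\tfrac{1}{2}\nabla|\nabla f|^2 = \nabla_{\nabla f}\nabla f = (\tfrac{R}{2(n-1)}+\lambda)\nabla f$, which shows $\nabla|\nabla f|^2$ is parallel to $\nabla f$. In particular, on the regular set $\{\nabla f\neq 0\}$, both $|\nabla f|^2$ and $R$ are functions of $f$ alone along any gradient flow line, and $\frac{d}{df}|\nabla f|^2 = \frac{R}{n-1}+2\lambda$.

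For the scalar curvature bound (\ref{positivity}), I would exploit that (\ref{Riccizero}) forces $\nabla f$ to be a null eigenvector of $Ric$, so $Ric$ has at most $n-1$ nonzero eigenvalues summing to $R$, giving $|Ric|^2 \geq R^2/(n-1)$ by Cauchy-Schwarz. Plugging this into (\ref{IdentitySch}) yields the Riccati-type inequality $\langle\nabla f,\nabla R\rangle \geq R(R-2(n-1)\lambda)/(n-1)$ along $\nabla f$-flow lines. For $\lambda>0$, if $R>2(n-1)\lambda$ somewhere, this forces $R$ to blow up in finite forward flow time; if $R<0$ somewhere, backward integration forces $R$ to blow down to $-\infty$ in finite backward time. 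A simultaneous comparison with $\frac{d}{dt}|\nabla f|^2=(\frac{R}{n-1}+2\lambda)|\nabla f|^2$ shows that the corresponding flow curve has finite arc length, so by completeness it would limit to an interior point of $M$ at which $R$ is infinite, contradicting smoothness. Hence $0\leq R\leq 2(n-1)\lambda$, establishing (\ref{positivity}) for $\lambda>0$; the case $\lambda<0$ is symmetric.

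To obtain the gradient estimate (\ref{gradestimate}) and the attainment of the global minimum, I would introduce $\Phi = |\nabla f|^2-2\lambda(f-f_0)$ and $\Psi = |\nabla f|^2-4\lambda(f-f_0)$, whose gradients $\nabla\Phi=\frac{R}{n-1}\nabla f$ and $\nabla\Psi=(\frac{R}{n-1}-2\lambda)\nabla f$ have, by (\ref{positivity}), nonnegative and nonpositive pairing with $\nabla f$ respectively. Thus $\Phi$ is nondecreasing and $\Psi$ nonincreasing along the forward flow of $\nabla f$. To locate a critical point at which both functions vanish, run the backward flow from any regular point: the local bounds $2\lambda\leq d|\nabla f|^2/df\leq 4\lambda$ imply that the flow curve has finite backward arc length and so, by completeness, converges to a point $p_0$ with $|\nabla f|(p_0)=0$. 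Monotonicity of $\Phi$ along flows then forces $f(p_0)=\inf_M f=f_0$, after which $\Phi\geq 0$ and $\Psi\leq 0$ globally give (\ref{gradestimate}); unboundedness of $f$ above follows from $df/dt\geq 2\lambda(f-f_0)$ along forward flow, yielding exponential growth in flow time. The principal obstacle is making the Riccati blow-up versus completeness step fully rigorous: one must track the coupled evolution of $R$ and $|\nabla f|^2$ in flow time and verify that the asymptotic rate $R\sim(T-t)^{-1}$ forces only $|\nabla f|\sim (T-t)^{-1/(2(n-1))}$, which remains integrable and so genuinely produces a limit point at finite arc length that contradicts the smoothness of $R$.
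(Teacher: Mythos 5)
Your setup follows exactly the route the paper indicates (the theorem itself is only quoted here from \cite{borges}, but Proposition \ref{strategylemma} and the remark after Example \ref{examplater} show the intended mechanism): the identity $\tfrac12\nabla|\nabla f|^2=(\tfrac{R}{2(n-1)}+\lambda)\nabla f$, the Cauchy--Schwarz bound $(n-1)|Ric|^2\ge R^2$ coming from (\ref{Riccizero}), and your Riccati-type inequality are precisely equivalent to the differential inequality (\ref{MainIneq}) for $b=|\nabla f|^2$ along integral curves of $\nabla f/|\nabla f|^2$. The problem is the step you yourself flag as the ``principal obstacle'': the blow-up-versus-completeness argument for (\ref{positivity}) does not close, and that step is the actual content of the theorem --- it is the estimate $(b'-2\lambda)(b'-4\lambda)\le 0$ which the paper explicitly attributes to \cite{borges} as the key tool.

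Concretely, along the flow of $\nabla f$ the comparison $\frac{dR}{dt}\ge\frac{R(R-2(n-1)\lambda)}{n-1}$ is only a \emph{lower} bound, while the exact relation $\frac{d}{dt}\log|\nabla f|^2=\frac{R}{n-1}+2\lambda$ shows that the arc length $\int|\nabla f|\,dt$ is bounded above only if $R$ is controlled from above --- which is what you are trying to prove. Nothing in the hypotheses bounds $|Ric|^2$ (hence $\frac{dR}{dt}$) from above, so $R$ and $|\nabla f|$ may grow fast enough that the flow line has infinite length in finite time and simply escapes to infinity; this is perfectly compatible with completeness, and no interior limit point with $R=\infty$ is produced. (Your rate $|\nabla f|\sim(T-t)^{-1/(2(n-1))}$ is also off --- with $R\sim(n-1)/(T-t)$ one gets $(T-t)^{-1/2}$ --- but the real issue is that these are lower bounds, not upper bounds.) If instead you parametrize by $f$, i.e.\ use the flow of $\nabla f/|\nabla f|^2$, where blow-up of $|\nabla f|$ at a finite parameter value would indeed give finite length and a genuine contradiction, then the Riccati inequality acquires the damping factor $1/|\nabla f|^2$ and becomes $bb''\ge(b'-2\lambda)(b'-4\lambda)$, which no longer forces finite-parameter blow-up: for instance $b(s)=e^{cs}$, $c>0$, satisfies this inequality on a half-line with $b'>4\lambda$ throughout. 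So in neither parametrization does the soft ``Riccati blow-up contradicts completeness'' argument rule out $R>2(n-1)\lambda$ or $R<0$; genuinely more analysis (in \cite{borges}, of the maximal interval and of the behavior at its ends, including critical points of $f$) is needed. Once (\ref{positivity}) is granted, your derivation of (\ref{gradestimate}) and of the existence of the extremum is essentially the intended one, although the identification $f(p_0)=\inf_M f$ (that all flow lines terminate at the same critical level) also needs an argument rather than a one-line appeal to monotonicity of $\Phi$.
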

For applications of the result above see \cite{borges}, where volume growth of geodesic balls are investigated for shrinking Schouten solitons.

In order to prove Theorem \ref{maintheorem}, an ordinary differential inequality satisfied by $|\nabla f|^2$ along suitable curves was important. Let us recall such inequality, since it will be used in next sections.

Let $p\in M$ be a regular point of $f$ and $\alpha:(\omega_{1},\omega_{2})\rightarrow M$ the maximal integral curve of $\frac{\nabla f}{|\nabla f|^2}$ through $p$. It is not hard to see that
	\begin{align}\label{lindepen}
	(f\circ\alpha)'(s)=1,\ \forall s\in(\omega_{1},\omega_{2}),
	\end{align}
that is, $f\circ\alpha$ is a linear function of $s$.

\begin{proposition}[\cite{borges}]\label{strategylemma}
	Let $(M^n,g,f,\lambda)$, $\lambda\neq0$, be a Schouten soliton with $f$ nonconstant and $\alpha(s)$, $s\in(\omega_{1},\omega_{2})$, a maximal integral curve of $\frac{\nabla f}{|\nabla f|^2}$. The function $b:(\omega_{1},\omega_{2})\rightarrow\mathbb{R}$, defined by
	\begin{align}\label{composition}
	b(s)=|\nabla f(\alpha(s))|^2.
	\end{align}
	satisfies the differential inequality
	\begin{equation}\label{MainIneq}
	bb''-(b')^2+6\lambda b'-8\lambda^2\geq0,
	\end{equation}
	where $b'$ and $b''$ are the first and the second derivative of $b$ with respect to $s$. Furthermore, equality holds in $(\ref{MainIneq})$ at $s_{1}$ if and only if $(n-1)|Ric|^2=R^2$ holds on $f^{-1}(s_{1})$.
\end{proposition}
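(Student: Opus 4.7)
The approach is to produce closed expressions for $b'$ and $b''$ at $\alpha(s)$ in terms of the scalar curvature $R$ and $|Ric|^{2}$, using the soliton equation together with the identities $(\ref{Riccizero})$ and $(\ref{IdentitySch})$, then algebraically collapse the left-hand side of $(\ref{MainIneq})$ into a nonnegative Cauchy-Schwarz remainder whose positivity comes from the fact that $\nabla f$ is an eigenvector of $Ric$ with eigenvalue $0$.

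First, since $\alpha'(s)=\nabla f/|\nabla f|^{2}$, I would compute $b'(s)=\langle\nabla|\nabla f|^{2},\alpha'\rangle=2\,\nabla^{2}f(\nabla f,\nabla f)/|\nabla f|^{2}$. Applying the fundamental equation $(\ref{fundeq})$ to the pair $(\nabla f,\nabla f)$ and killing the $Ric(\nabla f,\nabla f)$ term via $(\ref{Riccizero})$ gives the clean formula
\[
b'(s)=\frac{R(\alpha(s))}{n-1}+2\lambda.
\]
Differentiating once more and writing $\langle\nabla R,\alpha'\rangle=\langle\nabla f,\nabla R\rangle/b$, I would substitute identity $(\ref{IdentitySch})$ to replace $\langle\nabla f,\nabla R\rangle$, arriving at
\[
(n-1)\,b\,b''\;=\;2|Ric|^{2}-R\,b'.
\]
Using the first displayed formula to express $R=(n-1)(b'-2\lambda)$ and plugging into the combination $bb''-(b')^{2}+6\lambda b'-8\lambda^{2}$, the mixed terms $\lambda b'$ conspire into a perfect square $(b'-2\lambda)^{2}$, and the whole expression collapses to
\[
bb''-(b')^{2}+6\lambda b'-8\lambda^{2}\;=\;\frac{2}{(n-1)^{2}}\bigl[(n-1)|Ric|^{2}-R^{2}\bigr].
\]
At this point $(\ref{Riccizero})$ is used once more: at any regular point of $f$, $\nabla f$ is a null eigenvector of $Ric$, so $Ric$ restricted to $(\nabla f)^{\perp}$ has $n-1$ eigenvalues $\mu_{1},\dots,\mu_{n-1}$ with $R=\sum\mu_{i}$ and $|Ric|^{2}=\sum\mu_{i}^{2}$. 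The Cauchy-Schwarz inequality $(\sum\mu_{i})^{2}\leq(n-1)\sum\mu_{i}^{2}$ then yields $R^{2}\leq(n-1)|Ric|^{2}$, proving $(\ref{MainIneq})$.

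The delicate point—and the main obstacle—is the equality clause as phrased, since the computation above only gives equality in $(\ref{MainIneq})$ at $s_{1}$ iff Cauchy-Schwarz is sharp at the single point $\alpha(s_{1})$, i.e.\ iff $(n-1)|Ric|^{2}=R^{2}$ at that point. To upgrade the equality to the entire level set $f^{-1}(s_{1})$, I would apply the same argument to an arbitrary integral curve of $\nabla f/|\nabla f|^{2}$ crossing $f^{-1}(s_{1})$ at each regular point (using that $f\circ\alpha$ is affine by $(\ref{lindepen})$ and adjusting the parameter), and then invoke density of regular points (Proposition \ref{dense}) together with continuity of $R$ and $|Ric|^{2}$ to cover the remaining critical points of $f$ on the level set.
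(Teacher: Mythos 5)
Your derivation of the inequality is essentially the paper's argument: the same formulas $b'=\frac{R}{n-1}+2\lambda$ and $(n-1)\,b\,b''=\left\langle\nabla f,\nabla R\right\rangle$ along $\alpha$, combined with $(\ref{IdentitySch})$ and the trace inequality $(n-1)|Ric|^2\geq R^2$ that comes from $(\ref{Riccizero})$. The only difference is cosmetic: you record the exact identity $bb''-(b')^2+6\lambda b'-8\lambda^2=\frac{2}{(n-1)^2}\bigl[(n-1)|Ric|^2-R^2\bigr]$ instead of an inequality chain; I checked the algebra and it is correct, and it makes the pointwise equality discussion transparent.

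The gap is in your treatment of the equality clause. The identity only yields $(n-1)|Ric|^2=R^2$ at the single point $\alpha(s_{1})$, and your proposed upgrade --- running the same argument along other integral curves through other regular points of $f^{-1}(s_{1})$ --- does not work as stated: the hypothesis of equality in $(\ref{MainIneq})$ is given only for the curve $\alpha$ at $s_{1}$, and nothing tells you that the corresponding equality holds for a different integral curve at its crossing parameter, so repeating the computation there produces no information. What is actually needed (and what the paper's one-line passage to ``on $f^{-1}(s_{1})$'' implicitly uses) is rectifiability: from $(\ref{fundeq})$ and $(\ref{Riccizero})$ one has $\nabla|\nabla f|^2=\left(\frac{R}{n-1}+2\lambda\right)\nabla f$, hence $dR\wedge df=0$, so $R$ and $|\nabla f|^2$ --- and then, by $(\ref{IdentitySch})$, also $|Ric|^2$ --- are constant on (connected components of) the regular level set; the pointwise equality at $\alpha(s_{1})$ then propagates because $(n-1)|Ric|^2-R^2$ is constant along the level set. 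Your final step, covering critical points of $f$ in the level set via Proposition \ref{dense} and continuity, is also not sound as written: density of regular points in $M$ gives regular points approximating a critical point, but there is no guarantee that these approximating points lie in $f^{-1}(s_{1})$, which is what a continuity argument on that level set requires.
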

\begin{proof}
	Consider the smooth function $a:(\omega_{1},\omega_{2})\rightarrow\mathbb{R}$ given by $a(s)=R(\alpha(s))$. From $d(|\nabla f|^2)(X)=2\nabla\nabla f(X,\nabla f)$ and equation $(\ref{fundeq})$ one has
	\begin{align}\label{id}
		\begin{split}
			b'(s)=d(|\nabla f|^2)(\alpha'(s))=\frac{a(s)}{n-1}+2\lambda,
		\end{split}
	\end{align}
	which after differentiating gives $a'(s)=(n-1)b''(s)$. Consequently,
	\begin{align}\label{int3}
		\begin{split}
			\left\langle\nabla f(\alpha(s)),\nabla R(\alpha(s))\right\rangle=b(s)a'(s)=(n-1)b(s)b''(s).
		\end{split}
	\end{align}
	Putting $(\ref{IdentitySch})$, $(\ref{id})$ and $(\ref{int3})$ together we have
	\begin{align}\label{studyequality}
		(n-1)(b(s)b''(s)+b'(s)(b'(s)-2\lambda))&=2|Ric|^2(\alpha(s))\\
		&\geq2(n-1)(b'(s)-2\lambda)^2,\nonumber
	\end{align}
	where in the second line we have used the inequality $(n-1)|Ric|^2\geq R^2$, which is a consequence of $(\ref{Riccizero})$, and $(\ref{id})$ once again. Consequently,
	\begin{equation}\label{theend}
		bb''\geq(b'-2\lambda)(b'-4\lambda).
	\end{equation}
	Suppose there is an $s_{1}$ where equality is reached in $(\ref{MainIneq})$. Then equality is also obtained in $(\ref{studyequality})$, what is only possible if $(n-1)|Ric|^2=R^2$ on $f^{-1}(s_{1})$.
\end{proof}

%
\begin{example}\label{examplater}
Let $(\mathbb{R}^{n-k}\times_{\Gamma} N^k,g,f,\lambda)$ be the Schouten soliton of Example \ref{example}. Notice that it has constant scalar curvature $R=\frac{2(n-1)k\lambda}{2(n-1)-k}$ and, if $k\leq n-1$, its potential function $f$ is not constant and satisfies $|\nabla f|^2=\left(\frac{R}{n-1}+2\lambda\right)f$. After a linear change of coordinates using $(\ref{lindepen})$ with the condition $f(\alpha(0))=0$ and replacing $f$ by $s$, we obtain the function $b(s)=\left(\frac{R}{n-1}+2\lambda\right)s$. Now, a simple computation shows that $b'(s)=\frac{4(n-1)\lambda}{2(n-1)-k}$, and then
\begin{align}\label{goalgoal}
	(b'-2\lambda)(b'-4\lambda)=-\frac{8k(n-1-k)\lambda^2}{(2(n-1)-k)^2}\leq0.
\end{align}
Once $bb''=0$, for $b'$ is constant, we see that $b(s)$ is a solution of $(\ref{MainIneq})$. Observe that equality holds in $(\ref{goalgoal})$ for $k=0$ and $k=n-1$.
\end{example}

We take the opportunity to mention that the inequality in $(\ref{goalgoal})$, that is, $$(b'-2\lambda)(b'-4\lambda)\leq0,$$ was proven to be true for all complete Schouten solitons whose potential function is not constant \cite{borges}. This is an important tool used in the proof of Theorem \ref{maintheorem}.

\section{Equality for $(\ref{MainIneq})$ and proof of Theorem \ref{criteria}}\label{sectionequality}

In this section we investigate the geometry of $M$ when equality holds in $(\ref{MainIneq})$ for any $s\in(\omega_{1},\omega_{2})$. More precisely, we will assume that $b(s)=|\nabla f(\alpha(s))|^2$ is a solution of 
\begin{equation}\label{Maineq}
	bb''-(b')^2+6\lambda b'-8\lambda^2=0,
\end{equation}
for each $s\in(\omega_{1},\omega_{2})$. As we know from Proposition \ref{strategylemma}, this is equivalent to $(n-1)|Ric|^2=R^2$ on the set of regular points of $f$. If we could approximate any critical point by a sequence of regular points, this equivalence would be true all over $M$. In the next proposition we show that this is actually the case.

\begin{proposition}\label{dense}
	Let $(M^n,g,f,\lambda)$, $\lambda\neq0$, be a complete Schouten soliton with $f$ nonconstant. The set of regular points of $f$ is dense in $M$.
\end{proposition}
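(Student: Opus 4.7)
The plan is to argue by contradiction. The set of regular points of $f$ is open, so its failure to be dense is equivalent to the critical set $\{\nabla f=0\}$ having nonempty interior. I would assume such an interior $U$ exists and derive a contradiction by combining the trace equation $(\ref{trace})$ with the a priori bound $(\ref{positivity})$ from Theorem \ref{maintheorem}.

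On $U$, $\nabla f\equiv 0$, so also $\nabla\nabla f\equiv 0$ and in particular $\Delta f\equiv 0$. Feeding $\Delta f=0$ into $(\ref{trace})$ and solving for $R$ yields
\begin{equation*}
R=\frac{2n(n-1)\lambda}{n-2}\quad\text{on }U,
\end{equation*}
which requires $n\geq 3$ and uses $\lambda\neq 0$. Multiplying through by $\lambda$ and comparing with the universal bound $\lambda R\leq 2(n-1)\lambda^2$ from $(\ref{positivity})$, one sees that on $U$ we have
\begin{equation*}
\lambda R=\frac{2n(n-1)\lambda^2}{n-2}>2(n-1)\lambda^2,
\end{equation*}
since $n/(n-2)>1$ and $\lambda^2>0$. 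This is the desired contradiction, so the critical set has empty interior and the regular set is dense.

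There is one technical caveat: Theorem \ref{maintheorem}, and hence the bound $(\ref{positivity})$, is stated for complete noncompact solitons. To apply it here I would invoke the fact (from \cite{catino}) that a compact gradient Schouten soliton must have constant potential, so the hypothesis that $f$ is nonconstant forces $M$ to be noncompact, which is exactly what is needed. The only step that requires any care is this one—pinning down that the hypotheses of Theorem \ref{maintheorem} are genuinely available—while the algebraic contradiction itself is immediate from the two displayed identities.
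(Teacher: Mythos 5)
Your argument is correct and is essentially identical to the paper's: the paper also assumes the complement of the closure of the regular set is a nonempty open set, deduces $\Delta f=0$ there, and derives $\frac{2n(n-1)\lambda^2}{n-2}=\lambda R\leq 2(n-1)\lambda^2$ as a contradiction via $(\ref{trace})$ and $(\ref{positivity})$. Your extra remark that $f$ nonconstant forces noncompactness (so Theorem \ref{maintheorem} indeed applies) is a careful point the paper leaves implicit, but it does not change the route.
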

\begin{proof}
	Denote by $\mathcal{R}$ the set of regular points of $f$. Suppose by contradiction that the open set $U=M\backslash\overline{\mathcal{R}}$ is nonempty. Then $\Delta f$ vanishes in $U$, what from  $(\ref{trace})$ and $(\ref{positivity})$ gives, at $p\in U$, the following inequality
	\begin{align*}
		\frac{2(n-1)n\lambda^2}{n-2}=R(q)\lambda\leq2(n-1)\lambda^2,
	\end{align*}
	which cannot be true. Then $\overline{\mathcal{R}}=M$, as the proposition claims.
\end{proof}

The lemma below shows an algebraic relation between a solution of $(\ref{Maineq})$ and its first derivative, which can be seen as a first integral of $(\ref{Maineq})$.

\begin{lemma}\label{lemmaintegrability}
	Let $b(s)$ be a smooth solution of $(\ref{Maineq})$ so that $b'(s)\neq2\lambda$ on an interval $I\subset(\omega_{1},\omega_{2})$. Then there is a constant $\sigma_{0}$ so that
	\begin{align}\label{lemmaintegrabilityeq}
		(b'(s)-4\lambda)^2=\sigma_{0}b(s)(b'(s)-2\lambda), \forall s\in I.
	\end{align}
\end{lemma}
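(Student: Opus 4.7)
The plan is to exhibit $\sigma_{0}$ as a first integral of the second-order ODE $(\ref{Maineq})$. Concretely, I would define
$$F(s) = \frac{(b'(s) - 4\lambda)^{2}}{b(s)\,(b'(s) - 2\lambda)}$$
on the subset of $I$ where $b$ does not vanish, and show that $F'(s) \equiv 0$ by direct differentiation. The hypothesis $b' \neq 2\lambda$ guarantees that the denominator is nonzero. Applying the quotient rule produces a numerator of the form
$$2(b'-4\lambda)\,b''\,b\,(b'-2\lambda) \;-\; (b'-4\lambda)^{2}\bigl[b'(b'-2\lambda) + bb''\bigr],$$
and after replacing every occurrence of $bb''$ by $(b'-2\lambda)(b'-4\lambda)$, extracted from $(\ref{Maineq})$, both terms become equal to $2(b'-4\lambda)(b'-2\lambda)^{2}(b'-4\lambda)$ with opposite signs and cancel identically. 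Setting $\sigma_{0} = F(s_{*})$ at any $s_{*} \in I$ with $b(s_{*}) \neq 0$ then yields $(\ref{lemmaintegrabilityeq})$ on the component containing $s_{*}$.

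To motivate the specific form of $F$, one may regard $(\ref{Maineq})$ as a first-order ODE for $u := b'$ as a function of $b$. Since $b'' = u\,du/db$, the equation becomes separable,
$$\frac{u\,du}{(u-2\lambda)(u-4\lambda)} = \frac{db}{b},$$
and the partial fraction decomposition $u/[(u-2\lambda)(u-4\lambda)] = -1/(u-2\lambda) + 2/(u-4\lambda)$ integrates to $(b'-4\lambda)^{2}/(b'-2\lambda) = \sigma_{0}\, b$, up to the usual sign ambiguity from logarithms. In the written proof I would present only the direct verification that $F'\equiv 0$, which sidesteps any discussion of signs or absolute values.

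The only mild subtlety I expect is handling points of $I$ where $b(s)=0$. Evaluating $(\ref{Maineq})$ at such a point gives $(b'-2\lambda)(b'-4\lambda)=0$, and since the hypothesis excludes $b' = 2\lambda$, one must have $b' = 4\lambda$ there; consequently both sides of $(\ref{lemmaintegrabilityeq})$ vanish and the identity holds trivially. The constant value of $F$ on each connected component of $\{b\neq 0\}\cap I$ therefore extends continuously across any zero of $b$, so a single $\sigma_{0}$ works throughout $I$. Beyond the purely algebraic verification that $F' = 0$, I do not anticipate any substantive obstacle.
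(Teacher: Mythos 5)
Your proposal is correct and follows essentially the same route as the paper: the paper's proof likewise differentiates the quotient $(b'-4\lambda)^2/\bigl(b(b'-2\lambda)\bigr)$ and uses $(\ref{Maineq})$, in the form $bb''=(b'-2\lambda)(b'-4\lambda)$, to show the derivative vanishes identically, so $\sigma_{0}$ is the constant value of this ratio. Your extra discussion of possible zeros of $b$ is harmless but unnecessary in context, since $b(s)=|\nabla f(\alpha(s))|^2$ is strictly positive along the integral curve of $\nabla f/|\nabla f|^2$.
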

\begin{proof}
	A straightforward computation gives
	\begin{align*}
		&\left(\frac{(b'(s)-4\lambda)^2}{b(s)(b'(s)-2\lambda)}\right)'=\\\noalign{\smallskip}
		&\frac{(b'-4\lambda)(2(b'-2\lambda)bb''-(b'-4\lambda)(b'(b'-2\lambda)+bb''))}{b^2(b'-2\lambda)^2}=\\\noalign{\smallskip}
		&\frac{b'(b'-4\lambda)}{b^2(b'-2\lambda)^2}(bb''-(b'-4\lambda)(b'-2\lambda))=0,
	\end{align*}
for all $s\in I$, where we have used $(\ref{Maineq})$. This proves the lemma.
\end{proof}

Using the relations $|\nabla f|^2=b$ and $R=(n-1)(b'-2\lambda)$ we have the following corollary, which rewrites $(\ref{lemmaintegrabilityeq})$.

\begin{corollary}\label{corotobe}
	Let $(M^n,g,f,\lambda)$, $\lambda\neq0$, be a Schouten soliton with $f$ nonconstant for which $(\ref{atmost2})$ happens, and let $A\subset M$ be a set of regular points of $f$ where $R$ does not vanish. Then there is a constant $\sigma_{0}$ so that 
	\begin{align}\label{identmanif}
		(R-2(n-1)\lambda)^2=\sigma_{0}(n-1)R|\nabla f|^2
	\end{align}
	on $A$.
\end{corollary}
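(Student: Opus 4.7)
The plan is to transfer the ODE first integral from Lemma \ref{lemmaintegrability} into a geometric identity on $M$ by evaluating everything along integral curves of $\nabla f/|\nabla f|^{2}$.

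First I would observe that the hypothesis $(\ref{atmost2})$, which is assumed on the set of regular points of $f$, extends by continuity to all of $M$, since that set is dense by Proposition \ref{dense}. Proposition \ref{strategylemma} then gives equality in $(\ref{MainIneq})$, which means that for every maximal integral curve $\alpha\colon(\omega_{1},\omega_{2})\to M$ of $\nabla f/|\nabla f|^{2}$, the function $b(s)=|\nabla f(\alpha(s))|^{2}$ solves $(\ref{Maineq})$ for every $s$.

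Next, fix $p\in A$ and let $\alpha$ pass through $p$ at some $s=s_{0}$. Since $p$ is regular with $R(p)\neq 0$, identity $(\ref{id})$ yields $b'(s)-2\lambda=R(\alpha(s))/(n-1)\neq 0$ near $s_{0}$, so Lemma \ref{lemmaintegrability} applies and furnishes a constant $\sigma_{0}$ with $(b'-4\lambda)^{2}=\sigma_{0}\,b\,(b'-2\lambda)$. Substituting $b=|\nabla f|^{2}$, $b'-2\lambda=R/(n-1)$ and $b'-4\lambda=(R-2(n-1)\lambda)/(n-1)$, and clearing the overall factor $(n-1)^{-2}$, produces exactly $(\ref{identmanif})$ at $p$.

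The main obstacle I anticipate is that the constant $\sigma_{0}$ furnished by Lemma \ref{lemmaintegrability} a priori depends on the integral curve, while the corollary claims a single constant valid on all of $A$. I would address this by defining $\sigma_{0}$ pointwise on $A$ through the ratio $(R-2(n-1)\lambda)^{2}/((n-1)R|\nabla f|^{2})$; the argument above shows this quantity is constant along each integral curve of $\nabla f/|\nabla f|^{2}$, and differentiating it in directions orthogonal to $\nabla f$ while using $(\ref{Riccizero})$ together with the scalar identity $\langle\nabla f,\nabla R\rangle=R(R-2(n-1)\lambda)/(n-1)$, obtained by feeding $(\ref{atmost2})$ into $(\ref{IdentitySch})$, should yield constancy along the level sets of $f$ as well. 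A connectedness argument applied to each component of $A$ then upgrades these local constants to the single $\sigma_{0}$ claimed.
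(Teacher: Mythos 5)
Your first three paragraphs are exactly the paper's argument: the paper obtains the corollary by the same one-line substitution $b=|\nabla f|^2$, $b'-2\lambda=R/(n-1)$, $b'-4\lambda=(R-2(n-1)\lambda)/(n-1)$ into the first integral of Lemma \ref{lemmaintegrability}, after noting via Proposition \ref{dense} and Proposition \ref{strategylemma} that $b$ solves $(\ref{Maineq})$. That part is correct and needs no further comment.

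Your last paragraph addresses a real subtlety that the paper leaves implicit (the constant from Lemma \ref{lemmaintegrability} is a priori one constant per integral curve), but as sketched it does not close: the two identities you invoke, $(\ref{Riccizero})$ and $\left\langle\nabla f,\nabla R\right\rangle=R(R-2(n-1)\lambda)/(n-1)$, only control derivatives in the direction of $\nabla f$. Indeed, $(\ref{Riccizero})$ gives $X(|\nabla f|^2)=2\nabla\nabla f(X,\nabla f)=\left(\frac{R}{n-1}+2\lambda\right)g(X,\nabla f)=0$ for $X\perp\nabla f$, but nothing you cite forces $X(R)=0$, which is what the level-set constancy of your ratio actually requires. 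The missing ingredient is rectifiability (proved in \cite{catino} and quoted in the paper), or equivalently the quick argument: taking the exterior derivative of $d|\nabla f|^2=\left(\frac{R}{n-1}+2\lambda\right)df$ yields $dR\wedge df=0$, so at regular points $\nabla R$ is parallel to $\nabla f$ and $R$ is constant on connected components of regular level sets. With that in hand your computation does show the ratio is locally constant on the open set $\{df\neq0\}\cap\{R\neq0\}$; note, however, that connectedness then produces one constant per connected component of $A$, not automatically a single constant if $A$ is disconnected, so the uniformity claim should either be stated componentwise or reduced (as in the paper's later use in Proposition \ref{mainIng}) to the situation along a fixed integral curve and the level sets it meets.
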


We are finally ready to set the main ingredient to prove Theorem \ref{criteria}.

\begin{proposition}\label{mainIng}
	Let $(M^n,g,f,\lambda)$, $\lambda\neq0$, be a Schouten soliton with $f$ nonconstant and $\alpha(s)$, $s\in(\omega_{1},\omega_{2})$, a maximal integral curve of $\frac{\nabla f}{|\nabla f|^2}$. Assume that the function $b:(\omega_{1},\omega_{2})\rightarrow\mathbb{R}$, defined by $b(s)=|\nabla f(\alpha(s))|^2$, satisfies the ODE $(\ref{Maineq})$ on $(\omega_{1},\omega_{2})$. Then $b'\equiv2\lambda$ or $b'\equiv4\lambda$.
\end{proposition}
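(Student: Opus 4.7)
My plan is to combine an ODE uniqueness argument applied to (\ref{Maineq}) with the first integral from Lemma \ref{lemmaintegrability}, reducing the claim to a single problematic case that is incompatible with completeness of $M$.

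First, I would rewrite (\ref{Maineq}) as $b''=(b'-2\lambda)(b'-4\lambda)/b$; the right-hand side is jointly smooth in $(b,b')$ on the open set $\{b>0\}$, so Picard-Lindel\"of uniqueness applies. Since the affine functions $s\mapsto b(s_0)+2\lambda(s-s_0)$ and $s\mapsto b(s_0)+4\lambda(s-s_0)$ are both solutions, any solution attaining $b'(s_0)=2\lambda$ (respectively $4\lambda$) at some $s_0\in(\omega_1,\omega_2)$ must coincide with the corresponding affine solution on a neighborhood of $s_0$. The sets $\{s:b'(s)=2\lambda\}$ and $\{s:b'(s)=4\lambda\}$ are therefore open and (by continuity) closed in $(\omega_1,\omega_2)$, so by connectedness each is either empty or the entire interval, in which case one of the two conclusions holds.

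Suppose instead $b'$ avoids both values everywhere. By the positivity estimate (\ref{positivity}), $b'=R/(n-1)+2\lambda$ is confined to $[2\lambda,4\lambda]$ when $\lambda>0$ (with the reversed interval when $\lambda<0$), so in this remaining case $b'(s)\in(2\lambda,4\lambda)$ strictly on all of $(\omega_1,\omega_2)$. Lemma \ref{lemmaintegrability} then applies globally and yields $(b'-4\lambda)^2=\sigma_0 b(b'-2\lambda)$ with constant $\sigma_0>0$.

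The main obstacle is ruling out this last possibility. My plan is to exploit endpoint behavior: Theorem \ref{maintheorem} forces every critical point of $f$ to lie on $\{f=f_0\}$, and since $(f\circ\alpha)'\equiv 1$ by (\ref{lindepen}), the curve $\alpha(s)$ must approach the critical set as $s\to\omega_1^+$, so $b\to 0$. The first integral then forces $b'\to 4\lambda$ at that endpoint. The challenge is to upgrade this limiting identity into the global identity $b'\equiv 4\lambda$, which would contradict the strict inequality on $(\omega_1,\omega_2)$. I expect to do this by using (\ref{IdentitySch}) and the assumption at critical points to conclude that $R=2(n-1)\lambda$ and that $\nabla\nabla f$ has rank one with eigenvalue $2\lambda$ there; the resulting rigid Morse--Bott normal form, together with completeness of $M$, should force the manifold to take the warped-product shape of Example \ref{example} with either $k=n-1$ or $k=0$, both of which correspond to $\sigma_0=0$ and therefore contradict $\sigma_0>0$. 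The subtlest point will be converting this local structural information into a global rigidity statement via completeness.
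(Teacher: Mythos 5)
Your opening step is sound and essentially parallels the paper's: the paper derives first--order ODEs for $b(b'-2\lambda)$ and $b(b'-4\lambda)$ to show that if $b'$ hits $2\lambda$ or $4\lambda$ at one point (or $b''$ vanishes somewhere) then $b'$ is that constant everywhere, while you get the same dichotomy from Picard--Lindel\"of uniqueness applied to $b''=(b'-2\lambda)(b'-4\lambda)/b$ and the affine solutions; either route is fine. Likewise, invoking $(\ref{positivity})$ and Lemma \ref{lemmaintegrability} to obtain the first integral $(b'-4\lambda)^2=\sigma_{0}b(b'-2\lambda)$ with $\sigma_{0}>0$ in the remaining case matches the paper's use of Corollary \ref{corotobe}.

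The genuine gap is in the decisive step, which you yourself flag as a plan rather than an argument. The endpoint analysis ($b\to0$, hence $b'\to4\lambda$ as $s\to\omega_{1}^{+}$) produces no contradiction by itself: the ODE $(\ref{Maineq})$ admits non-affine solutions with exactly this boundary behavior (e.g.\ $b\sim 4\lambda t-\tfrac{4\sqrt{2}}{3}\lambda\sqrt{\sigma_{0}}\,t^{3/2}$ near the endpoint), so no purely ODE-level argument can rule out $\sigma_{0}>0$; genuinely geometric input is required. Your proposed input --- that at critical points $\nabla\nabla f$ has rank one with eigenvalue $2\lambda$, and that a ``Morse--Bott normal form'' plus completeness forces the warped-product shape of Example \ref{example} with $k=0$ or $k=n-1$ --- is not established. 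The rank-one claim does not follow from the two scalar identities available at a critical point ($R=2(n-1)\lambda$ and $(n-1)|Ric|^{2}=R^{2}$; note $\nabla f=0$ there, so $0$ is not known to be a Ricci eigenvalue), and the passage from local normal form to a global product/warped-product structure is precisely the kind of rigidity statement the whole theorem is after, so as written the plan is close to circular. The paper instead closes the case with a short concrete computation: by Proposition \ref{dense} and Corollary \ref{corotobe} the first integral becomes the identity $(R-2(n-1)\lambda)^{2}=\sigma_{0}(n-1)R|\nabla f|^{2}$ on $M$; at a critical point $p_{0}$ of $f$ (which exists by Theorem \ref{maintheorem}) this gives $R(p_{0})=2(n-1)\lambda$, and taking the Laplacian of the identity at $p_{0}$, then applying Bochner's formula and $(\ref{Riccizero})$, yields $|\nabla\nabla f|^{2}(p_{0})=0$, contradicting $n|\nabla\nabla f|^{2}(p_{0})\geq(\Delta f(p_{0}))^{2}=4\lambda^{2}>0$. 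Some version of this critical-point computation (or an equivalent geometric argument) is what your proposal is missing.
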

\begin{proof}
	First notice that $(\ref{Maineq})$ can be rewritten in the following two ways
	\begin{align*}
		(b(b'-2\lambda))'=\frac{2}{b^2}(b(b'-2\lambda))^2\ \ \ \ \text{and}\ \ \ \ (b(b'-4\lambda))'=\frac{2(b'-\lambda)}{b}b(b'-4\lambda).
	\end{align*}
	Since $b$ never vanishes on $(\omega_{1},\omega_{2})$, the ODE's above, satisfied by $b(b'-2\lambda)$ and $b(b'-4\lambda)$, respectively, imply that if $b'(s_{0})\in\{2\lambda,4\lambda\}$ for some $s_{0}\in(\omega_{1},\omega_{2})$, then $b'\equiv2\lambda$ or $b'\equiv4\lambda$ on $(\omega_{1},\omega_{2})$. The same conclusion holds if $b''(s_{0})$ vanishes.
	
	Suppose that $b'$ is not constant. In this case, $b'(s)\notin\{2\lambda,4\lambda\},\ \forall s\in(\omega_{1},\omega_{2})$, which by Proposition \ref{dense} and Corollary \ref{corotobe} implies that $(\ref{identmanif})$ is true on $M$ and $\sigma_{0}\neq0$. In particular, at a critical point $p_{0}$ of $f$ this implies that $R(p_{0})=2(n-1)\lambda$. Computing the Laplacian of both sides of $(\ref{identmanif})$ gives
	\begin{align*}
		&\sigma_{0}(n-1)(R\Delta|\nabla f|^2+|\nabla f|^2\Delta R+2\left\langle\nabla|\nabla f|^2,\nabla R\right\rangle)=\\
		&2(R-2(n-1)\lambda)\Delta R+2|\nabla R|^2,
	\end{align*}
	which at $p_{0}$ reveals that $2\sigma_{0}(n-1)^2\lambda\Delta|\nabla f|^2(p_{0})=2|\nabla R(p_{0})|^2=0$, where we have used that $p_{0}$ is a critical point of $R$, a consequence of Theorem \ref{maintheorem}. Using $(\ref{Riccizero})$ and Bochner's formula we conclude that
	\begin{align*}
		|\nabla\nabla f|^2(p_{0})&=\frac{1}{2}\Delta|\nabla f|^2(p_{0})-\left\langle\nabla\Delta f(p_{0}),\nabla R(p_{0})\right\rangle=0,
	\end{align*}
	from where it follows that
	\begin{align*}
		0=n|\nabla\nabla f|^2(p_{0})\geq(\Delta f(p_{0}))^2=4\lambda^2>0,
	\end{align*}
	which is not possible. This proves that $b'$ must be constant.	
\end{proof}

\begin{proof}[{\bf Proof of Theorem \ref{criteria}}]
	If $f$ is constant, then $M$ is Einstein. Now assume that $f$ is not constant and consider $b(s)$ defined as in $(\ref{composition})$. In view of $(\ref{Riccizero})$, $0$ is always an eigenvalue of $Ric$ at the regular points of $f$. Since the set of regular points of $f$ is dense in $M$ (Proposition \ref{dense}), $Ric$ has at most two eigenvalues at each point if and only if $(\ref{atmost2})$ holds on $M$. This, according to Proposition \ref{strategylemma}, implies that $b$ is a solution of $(\ref{Maineq})$. Applying Proposition \ref{mainIng} and using the relation $R=(n-1)(b'-2\lambda)$ (see $(\ref{id})$), we conclude that $R$ is constant, either equals to $0$ or $2(n-1)\lambda$. Then $(M^n,g,f,\mu)$, $\mu\in\{\lambda,2\lambda\}$, is a Ricci soliton whose Ricci tensor, according to $(\ref{atmost2})$, has constant rank equals either to $0$ or $n-1$. Now we apply Theorem 2 of \cite{ferlo} to conclude that $(M,g)$ is rigid, and the theorem is proved.
\end{proof}

\section{The Bach Tensor of a Schouten Soliton}\label{sectionbachT}
In this section we present the proof of Theorem \ref{vanisbach}, Theorem \ref{vanisdivbach} and Corollary \ref{confclass}.

First let us recall some definitions. Let $(M^n,g)$ be a Riemannian manifold. For any $n\geq3$ its {\it Weyl} and {\it Cotton} tensors are defined, respectively, by
\begin{align}
	W_{ijkl}=&R_{ijkl}-\frac{1}{n-2}(g_{ik}R_{jl}-g_{il}R_{jk}-g_{jk}R_{il}+g_{jl}R_{ik})\label{weyl}\\
	&+\frac{R}{(n-1)(n-2)}(g_{ik}g_{jl}-g_{il}g_{jk}),\nonumber\\
	C_{ijk}=&\nabla_{i}R_{jk}-\nabla_{j}R_{ik}-\frac{1}{2(n-1)}(g_{jk}\nabla_{i}R-g_{ik}\nabla_{j}R)\label{cotton},
\end{align}
and for $n\geq4$ the {\it Bach} tensor is defined by
\begin{align}\label{origbach}
	B_{ij}=&\frac{1}{n-3}\nabla^{k}\nabla^{l}W_{ikjl}+\frac{1}{n-2}R_{kl}W_{i\ j}^{\ k\ l}.
\end{align}
In terms of the Cotton tensor, $B$ can be rewritten by
\begin{align}\label{bhct}
	(n-2)B_{ij}=&\nabla^{k}C_{kij}+R_{kl}W_{i\ j}^{\ k\ l}.
\end{align}
As in \cite{caoqian}, we use the fact that $(\ref{bhct})$ is well defined for $n=3$ to define the Bach tensor in this dimension. More precisely, the Bach tensor of $(M^3,g)$ is defined by
\begin{align}\label{bhctn=3}
	B_{ij}=&\nabla^{k}C_{kij}.
\end{align}
One says that a Riemannian manifold $M^n$ is {\it locally conformally flat} if $n\geq4$ and its Weyl tensor vanishes or, if $n=3$ and its cotton tensor vanishes. $M^n$ is said to be {\it Bach-flat} when its Bach tensor vanishes, $n\geq3$.

The proposition below shows that whenever $p\in M$ is a regular point of $f$, its gradient is an eigenvector of the Bach tensor at this point and gives the corresponding eigenvalue.
\begin{proposition}\label{Propgradeigv}
	Let $(M^n,g,f,\lambda)$ be a gradient Schouten soliton with $n\geq3$. At a regular point of $f$ the Bach tensor of $M$ satisfies
	\begin{align}\label{gradeigv}
		B(\nabla f,X)=\frac{R^2-(n-1)|Ric|^2}{(n-1)(n-2)^2}g(\nabla f,X),
	\end{align}
for all $X\in\mathfrak{X}(M)$. If $n=3$ we have, in addition,
	\begin{align}\label{gradediv}
		(div B)(X)=\frac{R^2-2|Ric|^2}{2}g(\nabla f,X).
	\end{align}
\end{proposition}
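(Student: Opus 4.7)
My plan centers on a single identity for Schouten solitons relating the Cotton tensor to the Riemann tensor contracted with $\nabla f$, namely
\begin{align*}
C_{ijk} = -R_{ijkl}\nabla^{l}f.
\end{align*}
To derive it I would covariantly differentiate the soliton equation $(\ref{fundeq})$ and antisymmetrize in two indices; the Hessian commutator $\nabla_{i}\nabla_{j}\nabla_{k}f-\nabla_{j}\nabla_{i}\nabla_{k}f$ produces a Riemann term via the Ricci identity, while the right-hand side reproduces exactly the scalar-curvature correction appearing in the definition $(\ref{cotton})$ of $C$. This is the only step in which the soliton equation itself is used; the rest of the argument is index algebra that holds on general Riemannian manifolds.

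Armed with this identity I would prove $(\ref{gradeigv})$ via the Cotton form of Bach in $(\ref{bhct})$. Substituting $C_{kij}=-R_{kijl}\nabla^{l}f$ into $\nabla^{k}C_{kij}$ and using the contracted second Bianchi identity $\nabla^{k}R_{kijl}=\nabla_{j}R_{il}-\nabla_{l}R_{ij}$, I would then contract with $\nabla^{i}f$. Differentiating $(\ref{Riccizero})$ yields $(\nabla_{l}R_{ij})\nabla^{i}f=-R_{ij}\nabla_{l}\nabla^{i}f$, and substituting the soliton equation for the Hessian reduces the derivative-of-Ricci contributions to combinations of $R_{ij}\nabla^{i}f$ and $R_{ik}R^{k}{}_{j}\nabla^{i}f$, both killed by $(\ref{Riccizero})$. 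The only survivor is $-R_{kijl}\nabla^{i}f\,\nabla^{k}\nabla^{l}f$; replacing $\nabla^{k}\nabla^{l}f$ via $(\ref{fundeq})$ and noting that the trace $g^{kl}R_{kijl}=-R_{ij}$ annihilates against $\nabla^{i}f$, leaves $-R^{kl}R_{ikjl}\nabla^{i}f$. For the Weyl piece I would expand $R^{kl}W_{ikjl}$ via $(\ref{weyl})$; once again every explicit Ricci factor is annihilated by $(\ref{Riccizero})$, leaving $R^{kl}R_{ikjl}\nabla^{i}f-\tfrac{|Ric|^{2}}{n-2}\nabla_{j}f+\tfrac{R^{2}}{(n-1)(n-2)}\nabla_{j}f$. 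The two $R^{kl}R_{ikjl}\nabla^{i}f$ terms cancel, and rearranging gives exactly $(\ref{gradeigv})$.

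For the $n=3$ statement $(\ref{gradediv})$ I would specialize the above. Since Weyl vanishes identically in dimension three, the Riemann tensor is expressible purely in terms of $Ric$ and $R$; plugging this into $C_{kij}=-R_{kijl}\nabla^{l}f$ and using $(\ref{Riccizero})$ produces
\begin{align*}
C_{kij} = R_{ij}\nabla_{k}f - R_{kj}\nabla_{i}f + \tfrac{R}{2}\left(g_{kj}\nabla_{i}f - g_{ij}\nabla_{k}f\right).
\end{align*}
Taking the divergence $B_{ij}=\nabla^{k}C_{kij}$ with the aid of the contracted Bianchi $\nabla^{k}R_{kj}=\tfrac{1}{2}\nabla_{j}R$, the Hessian formula from $(\ref{fundeq})$, the trace identity $(\ref{trace})$, and $(\ref{IdentitySch})$ to handle $\langle\nabla R,\nabla f\rangle$, I would obtain an explicit expression for $B$ in terms of $Ric$, $R$, $\nabla^{k}R_{ij}\nabla_{k}f$ and $g$. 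A second divergence $\nabla^{i}B_{ij}$ is then a straightforward, if tedious, computation invoking the same identities repeatedly; the terms collapse to $\tfrac{R^{2}-2|Ric|^{2}}{2}\nabla_{j}f$.

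The conceptually clean part of the argument is the cancellation between the Cotton-divergence and Weyl contributions to Bach, driven purely by the antisymmetry of Riemann in its first two indices together with $(\ref{Riccizero})$ annihilating every Ricci factor contracted with $\nabla f$. The main obstacle I anticipate is the $n=3$ divergence computation: taking a second divergence of $B$ generates cubic-in-$Ric$ terms and derivatives of $|Ric|^{2}$, and the delicate point is orchestrating $(\ref{IdentitySch})$, the contracted Bianchi, and $(\ref{Riccizero})$ so that everything collapses to the stated multiple of $\nabla_{j}f$ without leaving spurious Ricci pieces.
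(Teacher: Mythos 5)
Your derivation of \eqref{gradeigv} is correct and is essentially the paper's own argument: you compute the Cotton tensor of a Schouten soliton as a contraction of the Riemann tensor with $\nabla f$ (your $C_{ijk}=-R_{ijkl}\nabla^{l}f$ is the paper's $C_{ijk}=R_{jikl}\nabla_{l}f$), feed it into the Cotton--Weyl expression \eqref{bhct} for the Bach tensor, and use the contracted second Bianchi identity, the soliton equation for the Hessian, and \eqref{Riccizero} to make every Ricci factor contracted with $\nabla f$ disappear; the only organizational difference is that you contract with $\nabla f$ before expanding, while the paper first writes out $(n-2)B_{kj}$ in full and contracts at the end. This part needs no change.

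For \eqref{gradediv} you depart from the paper, and this is where your plan has a genuine gap. The paper does not take a second divergence at all: it invokes the three-dimensional identity $(\operatorname{div}B)_{j}=-R_{il}C_{jil}$ from \cite{caocatino}, after which the computation is purely algebraic --- substitute $C_{jil}=R_{ijlr}\nabla_{r}f$, use the $n=3$ decomposition of the Riemann tensor into $Ric$, $R$ and $g$, and apply \eqref{Riccizero}. Your proposal instead computes $\nabla^{i}B_{ij}=\nabla^{i}\nabla^{k}C_{kij}$ by brute force, and the claim that the terms ``collapse'' is exactly the step you have not justified: after the first divergence, $B_{ij}$ contains the genuinely first-order term $(\nabla^{k}R_{ij})\nabla_{k}f$, and the second divergence produces $(\nabla^{i}\nabla^{k}R_{ij})\nabla_{k}f$, $R^{ik}\nabla_{k}R_{ij}$, $R^{kl}\nabla_{j}R_{kl}=\tfrac12\nabla_{j}|Ric|^{2}$ and $R_{kj}\nabla^{k}R$, none of which is individually a multiple of $\nabla_{j}f$. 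The identities you list (\eqref{trace}, \eqref{IdentitySch}, contracted Bianchi, \eqref{Riccizero}) are not enough to cancel them: you must in addition commute covariant derivatives (generating curvature-quadratic terms) and reuse the Cotton identity $C_{kij}=R_{ikjl}\nabla^{l}f$ to turn antisymmetrized first derivatives of $Ric$ back into algebraic expressions --- which amounts to re-deriving the very divergence formula for $B$ that the paper simply cites. So your route is salvageable but incomplete as written; either carry out that commutation argument explicitly, or quote the identity $(\operatorname{div}B)_{j}=-R_{il}C_{jil}$ and finish algebraically as the paper does. (A minor point: only the vanishing of the factor $R^{2}-2|Ric|^{2}$ is used later, so do not be troubled if your conventions produce the opposite overall sign; the paper's own computation lands on $\bigl(|Ric|^{2}-\tfrac{R^{2}}{2}\bigr)\nabla_{j}f$.)
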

\begin{proof}
	In order to prove the proposition we need to find an expression for the Bach tensor of a Schouten soliton. We will first compute the Cotton tensor of such a manifold, and then use definitions $(\ref{bhct})$ and $(\ref{bhctn=3})$ to achieve our goal. According to $(\ref{fundeq})$ and $(\ref{cotton})$ we have
	\begin{align}
		C_{ijk}=&\frac{\nabla_{i}R}{2(n-1)}g_{jk}-\nabla_{i}\nabla_{j}\nabla_{k}f-\frac{\nabla_{j}R}{2(n-1)}g_{ik}+\nabla_{j}\nabla_{i}\nabla_{k}f\nonumber\\
		&-\frac{1}{2(n-1)}(\nabla_{i}Rg_{jk}-\nabla_{j}Rg_{ik})\nonumber\\
		=& \nabla_{j}\nabla_{i}\nabla_{k}f-\nabla_{i}\nabla_{j}\nabla_{k}f\nonumber\\
		=& R_{jikl}\nabla_{l}f.\label{relation}
	\end{align}
	and then, as required in $(\ref{bhct})$ and $(\ref{bhctn=3})$, we have the expression
	\begin{align*}
		\nabla_{i}C_{ijk}=\nabla_{i}R_{klji}\nabla_{l}f+\left(\frac{R}{2(n-1)}+\lambda\right)R_{jk}-R_{jikl}R_{il}.
	\end{align*}
	On the othe hand, since
		\begin{align*}
		\nabla_{i}R_{klji}=&\nabla_{l}R_{kj}-\nabla_{k}R_{lj}\\
						  =&-\nabla_{l}\nabla_{k}\nabla_{j}f+\nabla_{k}\nabla_{l}\nabla_{j}f+\frac{1}{2(n-1)}(\nabla_{l}Rg_{kj}-\nabla_{k}Rg_{lj})\\
						  =&R_{klji}\nabla_{i}f+\frac{1}{2(n-1)}(\nabla_{l}Rg_{kj}-\nabla_{k}Rg_{lj})
	\end{align*}
	we conclude that
	\begin{align*}
		\nabla_{i}R_{klji}\nabla_{l}f&=R_{klji}\nabla_{l}f\nabla_{i}f+\frac{1}{2(n-1)}(\left\langle\nabla R,\nabla f\right\rangle g_{kj}-\nabla_{k}R\nabla_{j}f).
	\end{align*}
	Consequently,
	\begin{align}\label{pt2}
		\nabla_{i}C_{ijk}=&R_{klji}\nabla_{l}f\nabla_{i}f+\frac{1}{2(n-1)}(\left\langle\nabla R,\nabla f\right\rangle g_{kj}-\nabla_{k}R\nabla_{j}f)\\
		&+\left(\frac{R}{2(n-1)}+\lambda\right)R_{jk}-R_{jikl}R_{il}\nonumber
	\end{align}

	To prove $(\ref{gradeigv})$ when $n\geq4$, notice that $(\ref{weyl})$, $(\ref{bhct})$ and $(\ref{pt2})$ together imply
	\begin{align*}
		(n-2)B_{kj}=&R_{klji}\nabla_{l}f\nabla_{i}f+\frac{1}{2(n-1)}(\left\langle\nabla R,\nabla f\right\rangle g_{kj}-\nabla_{k}R\nabla_{j}f)\\
		&+\left(\frac{R}{2(n-1)}+\lambda\right)R_{jk}+(W_{jikl}-R_{jikl})R_{il}\\\nonumber
		=&R_{klji}\nabla_{l}f\nabla_{i}f+\frac{1}{2(n-1)}(\left\langle\nabla R,\nabla f\right\rangle g_{kj}-\nabla_{k}R\nabla_{j}f)\\
		&+\left(\frac{R}{2(n-1)}+\lambda\right)R_{jk}+(W_{jikl}-R_{jikl})R_{il}\\\nonumber
		=&R_{klji}\nabla_{l}f\nabla_{i}f+\frac{1}{2(n-1)}(\left\langle\nabla R,\nabla f\right\rangle g_{kj}-\nabla_{k}R\nabla_{j}f)\\
		&+\left(\frac{R}{2(n-1)}+\lambda\right)R_{jk}-\frac{R}{(n-1)(n-2)}(R_{kj}-Rg_{kj})\\\nonumber
		&+\frac{1}{n-2}(2R_{kl}R_{lj}-RR_{kj}-|Ric|^2g_{kj}).
	\end{align*}
	Using $(\ref{Riccizero})$ we finally have
	\begin{align*}
		(n-2)B_{kj}\nabla_{k}f=\frac{R^2-(n-1)|Ric|^2}{(n-1)(n-2)}\nabla_{j}f,
	\end{align*}
	as it was claimed. Similar computations show the result for $n=3$, where the Bach tensor is defined by $(\ref{bhctn=3})$.
	
	
	Now we turn to the divergence of $B$ when $n=3$. We use the following formula
	\begin{align*}
		(div B)_{j}=-R_{il}C_{jil},
	\end{align*}
	true in this dimension, proved in \cite{caocatino}. Using $(\ref{weyl})$ and $(\ref{relation})$ we then get
	\begin{align*}
		(div B)_{j}=&-R_{il}R_{ijlr}\nabla_{r}f=-R_{il}R_{jirl}\nabla_{r}f\\
		=&(|Ric|^2g_{jr}-2R_{ij}R_{ir}+RR_{jr}-\frac{R}{2}(Rg_{jr}-R_{jr}))\nabla_{r}f\\
		=&\left(|Ric|^2-\frac{R^2}{2}\right)\nabla_{j}f,
	\end{align*}
	proving $(\ref{gradediv})$.
\end{proof}

\begin{proof}[{\bf Proofs of Theorem \ref{vanisbach} and Theorem \ref{vanisdivbach}}]
	Let $(M^n,g,f,\lambda)$ be a Schouten Soliton with $n\geq3$, whose Bach tensor vanishes. Since by hypothesis we have $B_{ij}\nabla_{j}f=0$ for $n\geq3$, or $(div B)_{j}=0$ when $n=3$, one has on $M$, according to Proposition \ref{Propgradeigv} and Proposition \ref{dense}, that
	\begin{align*}
		R^2=(n-1)|Ric|^2.
	\end{align*}
	Now we apply item $(\ref{3item})$ Theorem \ref{criteria} to finish the proof.
\end{proof}

\begin{proof}[{\bf Proof of Corollary \ref{confclass}}]
	Let $(M^n,g,f,\lambda)$ be a locally conformally flat Schouten Soliton with $n\geq3$. It follows from $(\ref{origbach})$ and $(\ref{bhctn=3})$ that these manifolds are Bach-flat, from where Theorem \ref{vanisbach} and Theorem \ref{vanisdivbach} imply that these manifolds are isometric to those of Theorem \ref{criteria}. In addition, if item $(\ref{3item})$ happens, locally conformally flatness implies that $N^{n-1}$ must be a space form, and the corollary is proved.
\end{proof}
%
%

\bibliographystyle{amsplain}

\end{document}